\newtheorem{theorem}{Theorem}[section]
\newtheorem{proposition}[theorem]{Proposition}
\newtheorem{corollary}[theorem]{Corollary}
\theoremstyle{remark}
\newtheorem{remark}[theorem]{Remark}
\numberwithin{equation}{section}
\newcommand{\conj}[1]{\overline{#1}}
\theoremstyle{definition}
\newcommand{\Sp}{\operatorname{Sp}}
\newcommand{\GL}{\operatorname{GL}}
\title{The Lie algebra of the fundamental group of a surface as a symplectic module}
\author{Simion Filip}
\date{{\today}}
\begin{document}
\maketitle

\begin{abstract}
This note provides a formula for the character of the Lie algebra of the fundamental group of a surface, viewed as a module over the symplectic group.
\end{abstract}

\section{Introduction}

To an arbitrary group $G$ one canonically associates a Lie algebra by considering the lower central series and its associated graded. This Lie algebra often carries a lot of information about the group itself and its symmetries. This note provides an explicit description of the symplectic symmetries of the Lie algebra corresponding to the fundamental group of $\Sigma_g$ - a closed surface of genus $g$.

More concretely, denote this group by $G$. Its lower central series is defined recursively by
$$
G^{(i+1)}:= [G^{(i)},G] \hskip 1cm \textrm{ and }\hskip 1cm G^{(1)} := G
$$
In other words, the $i$\textsuperscript{th} group is the group generated by elements which are $(i-1)$-fold commutators.
Next, form the associated graded quotients
\begin{equation}
\frakg_i := G^{(i)}/G^{(i+1)} \hskip 1cm\forall i\geq 1
\label{eqn:def_of_g}
\end{equation}
These quotients are abelian groups and if we consider
$$
\frakg:=\bigoplus_{i\geq 1} \frakg_i
$$
then we obtain a graded Lie algebra, since the bracket $[a,b]:=aba^{-1}b^{-1}$ descends to the level of the quotients.

Since all the constructions were natural, symmetries of $G$ will act on the Lie algebra $\frakg$ (and preserve the grading). Note that conjugating by an element $x\in G$ induces the trivial automorphism on $\frakg$, so we can consider the action of the mapping class group $Mod(\Sigma_g)$ on $\frakg$. Observe further that if $\phi\in Mod(\Sigma_g)$ acts trivially on $\frakg_1$, then $\phi$ acts trivially on all of $\frakg$. 

Indeed, the first assertion is equivalent to $x\phi(x)^{-1}\in G^{(2)}$ for any $x\in G$. Using this and the identity $[ab,c]=a[b,c]a^{-1}[a,c]$ one can check directly that $x\phi(x)^{-1}\in G^{(i+1)}$ for any $x\in G^{(i)}$.

In our present case, since $G$ is the fundamental group of a surface $\Sigma_g$ of genus $g$, it is well-known that $\frakg_1\cong H_1(\Sigma_g,\bZ)$ (see the monograph of Farb and Margalit  \cite[Chapter 5]{FarbMargalit_Primer} for this result and more context). Further, the action of $Mod(\Sigma_g)$ factors through $\Sp_g(\bZ)$, the group of symplectic automorphisms of this $2g$-dimensional space. 

In the case when $G$ is a free group, one obtains the free Lie algebra with an action of $\GL_n$ ($\frakg_1$ is then the standard module).
The representation which occur in this case are classical, see the monograph of Reutenauer \cite{Reutenauer_Free_Lie} for a comprehensive collection of results.
This note describes the representation of $\Sp_g(\bZ)$ on the graded piece $\frakg_N$ as follows.

\begin{theorem}
The character of the representation of $\Sp_g(\bZ)$ on $\frakg_N$, denoted $\chi_N$, is given by the formula
$$
\chi_N = \frac 1N \sum_{d|N}\mu\left(\frac Nd\right) d 
\sum_{k \geq 0} \chi_V^{d-2k,(\frac Nd)}\frac{1}{d-k}\dbinom{d-k}{k}(-1)^k
$$
Here $\mu$ denotes the M\"{o}bius function and $\chi_V$ denotes the character of $\Sp_g$ on $H_1(\Sigma_g)$. The expression $\chi^{x,(y)}$ denotes the function on $\Sp_g$ given by $\chi^{x,(y)}(M)=\chi(M^y)^x$.
\end{theorem}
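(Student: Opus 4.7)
The plan is to identify $\frakg\otimes\bQ$ as an explicit quadratic Lie algebra and then extract the character of $\frakg_N$ from the character generating series of its universal enveloping algebra. By a theorem of Labute on surface groups (the case $g=1$ being immediate since $\pi_1$ is then abelian), $\frakg\otimes\bQ$ is the quotient of the free Lie algebra $L(V)$ on $V=H_1(\Sigma_g,\bQ)$ by the ideal generated by the single quadratic relation $\omega\in\Lambda^2V\subset L_2(V)$ dual to the cup product. All of this is canonical, so $\frakg$ is determined as a symplectic module once one knows its $\Sp_g$-equivariant Hilbert series.

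To compute that series, I would start from the cellular chain complex of the universal cover of $\Sigma_g$ for the standard CW-structure with one $0$-cell, $2g$ $1$-cells, and one $2$-cell glued by the product of commutators. Since $\Sigma_g$ is a $K(\pi,1)$ for $g\geq 1$, this gives a free resolution
$$
0 \to \bZ[\pi] \to \bZ[\pi]^{2g} \to \bZ[\pi] \to \bZ \to 0
$$
with differentials built from Fox derivatives of the defining relator. Passing to the associated graded with respect to the augmentation-ideal filtration, and using the Quillen-type isomorphism $\mathrm{gr}(\bQ[\pi])\cong U(\frakg\otimes\bQ)$, this becomes a graded resolution of $\Sp_g$-equivariant modules
$$
0 \to U(\frakg)[-2] \to U(\frakg)\otimes V\,[-1] \to U(\frakg) \to \bQ \to 0.
$$
Taking the alternating sum of character generating series yields
$$
\sum_{N\geq 0}\chi_{U(\frakg)_N}\,t^N \;=\; \frac{1}{1-\chi_V\,t+t^2}.
$$

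The rest is symmetric-function bookkeeping. By Poincar\'e--Birkhoff--Witt, $U(\frakg)\cong S^{\bullet}(\frakg)$ as graded $\Sp_g$-modules, so the same generating series equals the plethystic exponential of $\sum_{k\geq 1}\chi_{\frakg_k}\,t^k$. Taking logarithms of both expressions and collecting the coefficient of $t^N$ produces the power-sum identity
$$
\sum_{d\mid N} d\,\chi_{\frakg_d}(g^{N/d}) \;=\; N\sum_{k\geq 0}\frac{(-1)^k}{N-k}\binom{N-k}{k}\chi_V(g)^{N-2k},
$$
where the right-hand side comes from expanding $-\log(1-\chi_V t+t^2)=\sum_{N\geq 1}(\chi_V t-t^2)^N/N$ via the binomial theorem. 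M\"{o}bius inversion, carried out with $g$ replaced by $g^{N/d}$ in the argument (legitimate because the operation $\chi\mapsto\chi(g^{\,\cdot\,})$ is multiplicative in the exponent), then isolates $\chi_{\frakg_N}$ in precisely the form claimed.

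The principal obstacle is justifying, rather than merely asserting, that the associated graded of the cellular resolution remains exact; equivalently, that the quadratic algebra $T(V)/(\omega)$ is Koszul, so that the only relations in $\frakg$ are forced by $\omega$ and the Jacobi identity. This is the content of Labute's one-relator analysis (and can alternatively be checked by an explicit Gr\"{o}bner-basis argument exploiting the nondegeneracy of $\omega$); once that input is in hand, the remaining steps are routine manipulations with plethysms.
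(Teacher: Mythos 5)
Your overall plan is correct and reaches the same formula, but the route to the key identity $h(\cU(\frakg);t) = (1 - t\chi_V + t^2)^{-1}$ is genuinely different from the paper's. The paper works purely inside Lie theory: it takes the short exact sequence $0\to\frakr\to\bF(V)\to\frakg\to 0$, applies PBW to turn it into an identity of Euler--Poincar\'e series $h(\cU(\bF(V)))=h(\cU(\frakr))\cdot h(\cU(\frakg))$, and then uses Labute's theorem that $\frakr/[\frakr,\frakr]$ is a free rank-one $\cU(\frakg)$-module in degree $2$ (together with freeness of $\frakr$ as a Lie algebra) to solve for $h(\cU(\frakg))$. You instead start from the length-three free $\bZ[\pi]$-resolution coming from the aspherical CW-structure on $\Sigma_g$, pass to the associated graded via Quillen's $\mathrm{gr}\,\bQ[\pi]\cong \cU(\frakg\otimes\bQ)$, and read off the Hilbert series from the Euler characteristic of the resulting complex. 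Your approach is more topological and makes the quadratic--Koszul structure of $\cU(\frakg)=T(V)/(\omega)$ visible, but it trades Labute's module-theoretic input for a separate exactness claim about the associated-graded complex; as you correctly flag, this is the crux and is \emph{not} a formal consequence of exactness of the filtered complex (the relevant spectral sequence need not degenerate at $E_1$ a priori). You should be more careful attributing that exactness directly to ``Labute's one-relator analysis'': what Labute proves in the form the paper uses is freeness of $\frakr/[\frakr,\frakr]$ over $\cU(\frakg)$, which is equivalent to the needed exactness but is not literally a Koszulity statement, so a short Gr\"obner-basis argument (which you also mention) or an explicit appeal to the strong-freeness/inertness of the relator $\omega$ is the cleaner way to close the gap. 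Beyond that intermediate point, your PBW--logarithm--M\"obius inversion chain is exactly the paper's argument (Equation~\ref{eqn:log_hUg} through Proposition~\ref{prop:chi_nonexplicit} and Proposition~\ref{prop:formulas}), and the power-sum identity you display coincides with the paper's Equation~\ref{eqn:Dir_convolution} combined with Equation~\ref{eqn:A_N_explicit}.
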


This formula is proved in Proposition \ref{prop:formulas}, where another equivalent formulation is presented.

\paragraph{Outline of the approach.} The first step is to find the Euler-Poincar\'{e} series of the universal enveloping algebra $\cU(\frakg)$. This is possible due to the algebraic results of Labute, explained in Section 2. The actual computation for $\cU(\frakg)$ is done in Section $3$. The calculation of the character $\chi_N$ of the $N$\textsuperscript{th} graded piece $\frakg_N$ is contained in Section 4.

\paragraph{Acknowledgments.} I would like to thank Prof. Benson Farb for several useful discussions related to this note, as well as for remarks on an earlier version of the text.

\paragraph{Notation and Remarks.} Throughout the text, the following notations and conventions are used:
\begin{itemize}
	\item If $\frakh$ is a Lie algebra, its universal enveloping algebra is denoted $\cU(\frakh)$
	\item Universal enveloping algebras have a natural filtration, and $gr^\bullet \cU(\frakh)$ denotes the associated graded with respect to this filtration
	\item If $M$ is a $\bZ$-module, the free Lie algebra on this module is denoted $\bF(M)$
	\item If the $\bZ$-module $M$ has an extra structure (e.g. grading or $\Sp_g$-module structure), the free Lie algebra $\bF(M)$ inherits it
	\item A closed surface of genus $g$ is fixed and denoted $\Sigma_g$, its homology is denoted $V:=H_1(\Sigma_g,\bZ)$ and its fundamental group is denoted $G$
	\item The Lie algebra $\frakg$ is the one defined by Equation \ref{eqn:def_of_g} and its graded pieces are $\frakg_i$
	\item For any module $M$, its symmetric algebra is denoted
$$
\Sym^\bullet(M) := \bigoplus_{i\geq 0} \Sym^{i}(M)
$$
Any extra structure on $M$ (grading or $\Sp_g$-module) is inherited by $\Sym^\bullet (M)$
	\item In all the situations below, modules will start in homogeneous degree at least $1$ and will be finitely generated in each homogeneous component. These two properties together are stable under taking symmetric algebras or universal enveloping algebras.
	\item A key tool at many steps will be the Poincar\'{e}-Birkhoff-Witt theorem. It states that for any Lie algebra $\frakh$ the natural map
\begin{equation}
gr^\bullet \cU(\frakh) \to \Sym^\bullet(\frakh)
\label{eqn:PBW}
\end{equation}
is an isomorphism. Moreover, this isomorphism is compatible with any extra gradings or $\Sp_g$-module structures on $\frakh$.
\end{itemize}

\section{Preliminary calculations}

This section collects purely algebraic results that will be used when computing the universal enveloping algebra of the Lie algebra $\frakg$.

\paragraph{Setup.} Recall the surjection $\bF_{2g}\onto G$, where $\bF_{2g}$ is the free group on $2g$ generators. To fix notation, the group $G$ is a group freely generated by elements $a_1,b_1,\ldots, a_g,b_g$ with the single relation $[a_1,b_1]\cdots [a_g,b_g]=1$. 

We make use of the results of Labute from \cite{Labute_Group}.
The main theorem of the paper implies that $\frakg$ is a free Lie algebra with a single relation. 
The results can be summarized as follows.
\begin{description}
\item[(i)] Let $V_{2g}:=H_1(\Sigma_g,\bZ)$ be the $\Sp_g$-module which is also canonically identified with the abelianization of $\pi_1$, and let $\bF(V_{2g})$ be the free Lie algebra on this module. The Lie sub-algebra $\frakr\subset \bF(V_{2g})$ is defined by the following canonical short exact sequence.
\begin{equation}
0\to \frakr \into \bF(V_{2g}) \onto \frakg\to 0
\label{eqn:ses_r_F_g}
\end{equation}
\item[(ii)] Consider the element $\rho:=[a_1,b_1]\cdots [a_g,b_g]\in \bF_{2g}$ and let $\overline{\rho}\in \bF(V_{2g})$ be its image in the free Lie algebra. Then $\overline{\rho}\in\frakr$ and moreover, $\frakr$ is the Lie ideal generated by this element.
\end{description}
The following observations will be needed later.
\begin{description}
\item[(i)] The middle and last groups in the sequence \ref{eqn:ses_r_F_g} are naturally $\Sp_g$-modules, and moreover the maps are $\Sp_g$-equivariant. Since the free Lie algebra $\bF(V_{2g})$ can be expressed by tensor operations in terms of the module $V_{2g}$, it is semisimple as an $\Sp_g$-module. 
\item[(ii)] The previous remark implies that $\frakg$ and $\frakr$ are also semisimple $\Sp_g$-modules and their components are derived from the module $V_{2g}$ by tensor operations.
\item[(iii)] For the same reasons as before, the exact sequence \ref{eqn:ses_r_F_g} splits as a sequence of $\Sp_g$-modules.
\end{description}

The last observation yields the following result.

\begin{proposition}
\label{prop:UF=Ug_Ur}
Given the short exact sequence \ref{eqn:ses_r_F_g}, we have an isomorphism of the associated graded of the universal enveloping algebras:
\begin{equation}
gr^\bullet \cU(\bF(V_{2g})) \cong gr^\bullet \cU(\frakg)\otimes_\bZ gr^\bullet \cU(\frakr)
\end{equation}
Moreover, this isomorphism is compatible with the $\Sp_g$-structure on all the terms involved.
\end{proposition}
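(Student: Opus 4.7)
The plan is to reduce the statement entirely to the Poincaré--Birkhoff--Witt theorem together with the splitting of the short exact sequence \ref{eqn:ses_r_F_g} as $\Sp_g$-modules.

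First I would apply PBW (Equation \ref{eqn:PBW}) to each of the three universal enveloping algebras appearing in the statement. This reduces the claim to producing a natural, $\Sp_g$-equivariant isomorphism
$$
\Sym^\bullet(\bF(V_{2g})) \;\cong\; \Sym^\bullet(\frakg) \otimes_\bZ \Sym^\bullet(\frakr).
$$
Since PBW respects the auxiliary $\Sp_g$-structure and grading (as stated in the Notation and Remarks), establishing the symmetric-algebra version with its $\Sp_g$-action suffices.

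Next I would invoke observation (iii) of the Setup: the sequence \ref{eqn:ses_r_F_g} splits as a sequence of $\Sp_g$-modules, giving a $\Sp_g$-equivariant decomposition $\bF(V_{2g}) \cong \frakg \oplus \frakr$. The standard natural isomorphism $\Sym^\bullet(A \oplus B) \cong \Sym^\bullet(A) \otimes \Sym^\bullet(B)$, valid for any pair of modules and induced by the universal property of the symmetric algebra, is functorial and therefore automatically $\Sp_g$-equivariant when $A$ and $B$ carry compatible $\Sp_g$-actions. Combining this with the splitting yields the desired isomorphism.

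The main subtlety — not really an obstacle, but the point to be careful about — is that the PBW isomorphism is merely an isomorphism of graded modules, not of algebras: the algebra structure on $gr^\bullet \cU(\frakh)$ is the commutative one coming from the symmetric algebra, while the algebra structure on $\cU(\frakh)$ itself is noncommutative. Consequently the claimed isomorphism is an isomorphism of graded $\Sp_g$-modules (via the tensor product of graded modules), and it is in this category that the proof takes place. No finer algebra structure is being asserted, and no choice of splitting of \ref{eqn:ses_r_F_g} is canonical, but since we only care about the underlying $\Sp_g$-module, this is harmless.
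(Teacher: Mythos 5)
Your proof is correct and takes the same route as the paper: reduce via PBW (Equation \ref{eqn:PBW}) to the symmetric-algebra statement, then apply the $\Sp_g$-equivariant splitting of \ref{eqn:ses_r_F_g} together with $\Sym^\bullet(A\oplus B)\cong\Sym^\bullet(A)\otimes\Sym^\bullet(B)$. One small caveat on your closing remark: the PBW map $gr^\bullet\cU(\frakh)\to\Sym^\bullet(\frakh)$ is in fact an isomorphism of graded commutative algebras (what is noncommutative is $\cU(\frakh)$ itself, not its associated graded), so the isomorphism in the proposition could be asserted at the algebra level; but since only the $\Sp_g$-module structure is used downstream, your more conservative reading is harmless.
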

\begin{proof}
By the Poincar\'{e}-Birkhoff-Witt theorem (see Equation \ref{eqn:PBW}) the claim reduces to the identity
$$
\Sym^\bullet(\bF(V_{2g})) \cong \Sym^\bullet(\frakg)\otimes_\bZ \Sym^\bullet(\frakr)
$$
Because the short exact sequence \ref{eqn:ses_r_F_g} splits as a sequence of $\Sp_g$-modules, the  above isomorphism is implied by the generally valid identity
\begin{equation}
\Sym^\bullet(A\oplus B) = \Sym^\bullet(A)\otimes \Sym^\bullet(B)
\label{eqn:sym_A+B}
\end{equation}
Moreover, this last identity respects any extra gradings or $\Sp_g$-module structures that $A$ or $B$ may have.
\end{proof}

The next task is to understand the Lie algebra $\frakr$, or rather its universal enveloping algebra $\cU(\frakr)$. Consider the adjoint action of $\frakg$ on $\frakr/[\frakr,\frakr]$. It is well-defined and extends to give $\frakr/[\frakr,\frakr]$ the structure of a $\cU(\frakg)$-module.

The structure of $\frakr/[\frakr,\frakr]$ is given by Theorem 1 in \cite{Labute_Lie_Algebra}, which states the following.

\begin{theorem}[\cite{Labute_Lie_Algebra}]
\label{thm:rrr_free_Ug}
As a $\cU(\frakg)$-module, $\frakr/[\frakr,\frakr]$ is free on the generator $\conj{\rho}$. Here, $\rho=[a_1,b_1]\cdots [a_g,b_g]\in \bF_{2g}$ and $\conj{\rho}$ denotes its image in $\frakr$, and by abuse of notation in $\frakr/[\frakr,\frakr]$.
\end{theorem}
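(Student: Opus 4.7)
The plan is to exploit the fact that $\Sigma_g$ is an Eilenberg--MacLane space $K(G,1)$ and translate the resulting short free resolution of $\bZ$ over $\bZ[G]$ into a free resolution over $\cU(\frakg)$. The standard CW decomposition of $\Sigma_g$ with one $0$-cell, $2g$ $1$-cells, and one $2$-cell attached via $\rho$ yields the length-two resolution
\begin{equation*}
0 \to \bZ[G] \xrightarrow{d_2} \bZ[G]^{2g} \xrightarrow{d_1} \bZ[G] \xrightarrow{\epsilon} \bZ \to 0,
\end{equation*}
where $d_2$ is right-multiplication by the column of Fox derivatives of $\rho$, the map $d_1$ sends the $i$-th basis vectors to $a_i-1$ and $b_i-1$, and $\epsilon$ is the augmentation.

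The first main step is to pass this resolution to the associated graded with respect to the augmentation-ideal filtration $I \subset \bZ[G]$. Using the classical identification of $gr^\bullet_I \bZ[G]$ with $\cU(\frakg)$, one should obtain a free resolution
\begin{equation*}
0 \to \cU(\frakg) \xrightarrow{\overline{d}_2} \cU(\frakg)^{2g} \xrightarrow{\overline{d}_1} \cU(\frakg) \to \bZ \to 0,
\end{equation*}
in which $\overline{d}_1$ is given by the degree-one images of $a_i-1, b_i-1$, and $\overline{d}_2$ is determined by the associated graded of the Fox derivatives, which are homogeneous precisely because $\rho$ is a product of commutators and hence lies in $G^{(2)}$. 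I would then identify the kernel of $\overline{d}_1$ with $\frakr/[\frakr,\frakr]$ in a purely Lie-algebraic way: applying $\cU(\frakg)\otimes_{\cU(\bF(V_{2g}))}(-)$ to the standard Koszul-type resolution of $\bZ$ by $\cU(\bF(V_{2g}))$-modules gives the four-term sequence
\begin{equation*}
0 \to \frakr/[\frakr,\frakr] \to \cU(\frakg)\otimes_\bZ V_{2g} \to \cU(\frakg) \to \bZ \to 0,
\end{equation*}
which is the Lie-algebraic analogue of the relation module of $G$. Comparing this with the Fox resolution above forces $\frakr/[\frakr,\frakr] \cong \cU(\frakg)$, and tracing the identifications shows the generator is $\overline{d}_2(1) = \conj{\rho}$.

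The main obstacle is verifying that the associated graded of the Fox resolution remains exact. The functor $gr^\bullet_I$ is exact only on \emph{strictly} filtered complexes, and a priori the kernel of a differential could jump in filtration, spoiling exactness. To rule this out I would compare Poincar\'{e}--Hilbert series on both sides, using PBW to compute the series of $\cU(\bF(V_{2g}))$ and Proposition~\ref{prop:UF=Ug_Ur} to relate it to that of $\cU(\frakg)\otimes \cU(\frakr)$; alternatively, the Hopf-algebra structure on $\cU(\frakg)$ can be used to exclude such filtration jumps directly. It is here that the very special nature of the surface relation enters: $\rho$ is a single relator that lies homogeneously in degree two of the lower central filtration, so no ``collapse'' occurs in passing to the associated graded and the resolution remains acyclic of length exactly two.
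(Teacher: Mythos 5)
The paper does not prove this statement: it is quoted verbatim as Theorem~1 of Labute's paper and used as a black box, so there is no in-paper argument to compare against. Judged on its own terms, your outline is aimed in a reasonable direction (Fox calculus on the aspherical $2$-complex, passage to associated graded, comparison with the Lie-algebra relation-module sequence), and you correctly identify where the difficulty lies, but the proposal has two genuine gaps precisely at that point.

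First, you invoke ``the classical identification of $gr^\bullet_I \bZ[G]$ with $\cU(\frakg)$''. For a free group this is classical (Magnus), but for a surface group the isomorphism $\cU(\frakg)\xrightarrow{\ \sim\ } gr^\bullet_I\bZ[G]$ is \emph{itself} a theorem of Labute, proved in the same papers and by essentially the same analysis as the statement you are trying to establish; the natural map is always surjective but injectivity is exactly the assertion that the relator does not produce unexpected collapses in the $I$-adic filtration. Assuming it as background smuggles in the hard content.

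Second, and more seriously, the step you flag as the ``main obstacle'' --- exactness of the associated graded of the Fox resolution --- is not repaired by the fix you propose. You suggest comparing Poincar\'e--Hilbert series via Proposition~\ref{prop:UF=Ug_Ur} and PBW, but Proposition~\ref{prop:UF=Ug_Ur} only gives $h(\cU(\bF(V_{2g})))=h(\cU(\frakg))\cdot h(\cU(\frakr))$; it does not give $h(\cU(\frakg))$ by itself. In the paper's chain of reasoning, $h(\cU(\frakg))=\tfrac{1}{1-t\chi_V+t^2}$ is \emph{deduced from} Theorem~\ref{thm:rrr_free_Ug} (via Equations~\ref{eqn:hU(r)=hr} and~\ref{eqn:hrrr=hUh}). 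Using that Hilbert series to certify the exactness that would in turn prove Theorem~\ref{thm:rrr_free_Ug} is circular. The alternative you float --- that ``the Hopf-algebra structure on $\cU(\frakg)$ can be used to exclude such filtration jumps directly'' --- is not an argument as stated; you would need to explain what property of the Hopf structure forces strictness of the filtration on the kernel of $d_2$.

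The surrounding scaffolding is fine: the four-term sequence $0\to\frakr/[\frakr,\frakr]\to\cU(\frakg)\otimes V_{2g}\to\cU(\frakg)\to\bZ\to0$ is the standard Lie-algebraic relation-module sequence, and once one \emph{has} a length-two free resolution over $\cU(\frakg)$ together with the fact that $\conj\rho$ generates $\frakr/[\frakr,\frakr]$ (Labute's other result, already recorded in the paper as observation (ii)), freeness on $\conj\rho$ does follow by a Hilbert-series count. But that count presupposes the two facts you have not independently secured, and those facts are essentially the theorem. To close the gap you would need a genuinely independent proof that the surface relator $\rho$ is ``inert'' for the lower central filtration --- for instance Labute's own combinatorial argument via elimination in free Lie rings, or a Gr\"obner/Koszul-duality argument exploiting that $1-t\chi_V+t^2$ has no repeated factors --- rather than a Hilbert-series comparison that assumes the answer.
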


To proceed, note that $\frakr$ is a Lie subalgebra of a free Lia algebra, thus it is itself free (see \cite{Shirshov}). Later calculations require a basis on which $\frakr$ is free. This is provided by the next result, which follows from Proposition 2, Section 3 of \cite{Labute_Lie_Algebra} (see also \cite[sec. 4]{Labute_Lie_Algebra}). 

\begin{proposition}
\label{prop:r_to_rrr}
Consider the map $\frakr \to \frakr/[\frakr,\frakr]$, a map of graded Lie algebras and also $\Sp_g$-modules. For each graded component of $\frakr/[\frakr,\frakr]$, take a lift to $\frakr$ of a homogeneous basis (respecting the $\Sp_g$-module structure). Then $\frakr$ is a free Lie algebra on these lifts.
\end{proposition}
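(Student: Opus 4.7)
The plan is to let $W \subset \frakr$ denote the graded $\Sp_g$-submodule spanned by the chosen homogeneous lifts, and to prove that the natural Lie algebra homomorphism $\phi \colon \bF(W) \to \frakr$ induced by the inclusion $W \hookrightarrow \frakr$ is an isomorphism of graded $\Sp_g$-equivariant Lie algebras. Freeness of $\frakr$ on these lifts is then immediate from the definition.

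First I would establish surjectivity by induction on the grading. By the defining property of the lifts, $W$ maps isomorphically onto $\frakr/[\frakr,\frakr]$, so $\operatorname{im}(\phi) + [\frakr,\frakr] = \frakr$. Since $\frakr$ is concentrated in degrees $\geq 2$ and is finitely generated in each degree, the induction proceeds: in degree $n$, every element of $[\frakr,\frakr]_n$ is a sum of brackets $[x,y]$ with $x \in \frakr_i$, $y \in \frakr_j$, $i+j=n$, $i,j<n$, and these lie in $\operatorname{im}(\phi)$ by the inductive hypothesis; combining with the image of $W$ in degree $n$ yields $\frakr_n \subset \operatorname{im}(\phi)$.

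For injectivity, I would invoke Shirshov's theorem (already recalled in the paragraph preceding the proposition): $\frakr$ is free, so $\frakr \cong \bF(V)$ for some graded $\Sp_g$-module $V$. Since the abelianization of a free Lie algebra on $V$ is canonically $V$, there are graded $\Sp_g$-module isomorphisms $V \cong \frakr/[\frakr,\frakr] \cong W$. Consequently $\bF(W)$ and $\frakr$ have equal graded rank in each homogeneous component, and each such component is a finitely generated free $\bZ$-module. A $\bZ$-linear surjection between two free $\bZ$-modules of the same finite rank has trivial kernel, so $\phi$ is injective in each degree and thus an isomorphism. The $\Sp_g$-equivariance is built into the construction.

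The only delicate point I anticipate is the integral structure underlying the rank-matching step: the argument requires each graded piece of $\frakr/[\frakr,\frakr]$ to be $\bZ$-free, so that the phrase \emph{homogeneous basis respecting the $\Sp_g$-module structure} is unambiguous. This follows from Theorem \ref{thm:rrr_free_Ug} combined with Poincar\'{e}-Birkhoff-Witt applied to the torsion-free Lie algebra $\frakg$: $\cU(\frakg)$ is $\bZ$-free in each degree, hence so is $\frakr/[\frakr,\frakr]$ as a free $\cU(\frakg)$-module. Once this foundation is in place, comparing graded ranks -- equivalently, characters as $\Sp_g$-modules after tensoring with $\bQ$ -- is routine and closes the argument.
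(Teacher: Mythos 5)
Your proof is correct in outline and takes a genuinely different route from the paper. The paper disposes of this proposition by citing Labute's Proposition 2, Section 3 of \cite{Labute_Lie_Algebra} directly (that result is tailored to exactly this situation: an ideal $\frakr$ in a free Lie algebra whose abelianization is a free $\cU(\bF/\frakr)$-module), and then adds a remark that $\Sp_g$-equivariance follows from semisimplicity. You instead give a self-contained two-step argument: a graded Nakayama-type induction for surjectivity of $\bF(W)\to\frakr$, and a graded rank count for injectivity. Both approaches are valid; yours is more elementary and transparent, at the cost of being less specific to Labute's setup.

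One imprecision worth flagging: in the injectivity step you write that Shirshov's theorem gives $\frakr\cong\bF(V)$ for some \emph{graded} $\Sp_g$-module $V$. The Shirshov reference in the paper, as quoted just before the proposition, only asserts abstract freeness of a Lie subalgebra of a free Lie algebra; it does not by itself produce a homogeneous free generating set. What you actually need is the graded refinement of Shirshov--Witt (a graded subalgebra of a graded free Lie algebra over a field is free on a homogeneous generating set), applied after tensoring with $\bQ$, which then identifies the graded generating space with $(\frakr/[\frakr,\frakr])\otimes\bQ$ and makes the rank comparison go through. Alternatively, one can avoid graded Shirshov entirely and argue homologically: abstract freeness of $\frakr$ gives $H_2(\frakr;\bQ)=0$, and the five-term exact sequence for the extension $0\to K\to\bF(W)\to\frakr\to 0$ together with the vanishing $H_2(\bF(W))=0$ and the isomorphism on $H_1$ forces $K=[\bF(W),K]$, hence $K=0$ by the positive grading. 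Either patch closes the gap; as written, the citation is slightly undersold. Your final paragraph correctly supplies the $\bZ$-freeness of $\frakr/[\frakr,\frakr]$ needed to pass between $\bQ$-ranks and $\bZ$-ranks.
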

\begin{remark}
The results of Labute refer to a situation without an $\Sp_g$ action. However, that result combined with the semisimplicity of the $\Sp_g$-module $\frakr$ and provided the lifts respect the decomposition, yields the statement of the proposition.
\end{remark}

\section{Identifying $\cU(\frakg)$}

The calculations in this section identify the Euler-Poincar\'{e} series of $\cU(\frakg)$ as an $\Sp_g$-module. 

\paragraph{Notation.} For a graded $\Sp_g$-module $M$ with graded pieces $M_i$, i.e. such that $M=\oplus M_i$, define the formal sum
$$
h(M;t):=\sum_i t^i \chi_i
$$
Here $\chi_i:\Sp_g(\bZ)\to \bZ$ is the character of the $\Sp_g$-representation on $M_i$. The generating function $h(M;t)$ is viewed as a formal combination of functions on the group. Most often, the formal variable $t$ will be omitted from the expression for $M$

\begin{proposition}
\label{prop:basic_facts_EP}
\begin{description}
\item[(i)] If $A$, $B$ and $C$ are graded $\Sp_g$-modules and $C\cong A\otimes B$ (as graded $\Sp_g$-modules) then
$$
h(C;t) = h(A;t)\cdot h(B;t)
$$
\item[(ii)] Suppose $\frakh$ is a free Lie algebra on a $\Sp_g$-module $M$, with elements of $M$ in homogeneous degree $2$. 
If $\cU(\frakh)$ is its enveloping algebra (with the induced $\Sp_g$-action) then
$$
h(\cU(\frakh);t) = \frac{1}{1 - h(M;t)}
$$
\end{description}
\end{proposition}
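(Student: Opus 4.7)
Part (i) is a direct character computation. The isomorphism $C \cong A \otimes B$ of graded $\Sp_g$-modules gives $C_n \cong \bigoplus_{i+j=n} A_i \otimes B_j$, and since characters are multiplicative on tensor products and additive on direct sums, one has $\chi_{C,n} = \sum_{i+j=n} \chi_{A,i}\chi_{B,j}$; collecting powers of $t$ yields $h(C;t) = h(A;t)\,h(B;t)$.

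For part (ii), my plan is to reduce the statement to the classical identification $\cU(\bF(M)) \cong T(M)$ of the universal enveloping algebra of a free Lie algebra with the tensor algebra, and then invoke part (i) repeatedly. First I would establish $\cU(\bF(M)) \cong T(M)$ as graded $\Sp_g$-algebras by a universal-properties argument: any module map $M \to A$ into an associative algebra $A$ extends uniquely to a Lie algebra map $\bF(M) \to A$ (using the commutator bracket on $A$), and then to $\cU(\bF(M)) \to A$ by the universal property of the enveloping algebra; since the tensor algebra $T(M)$ satisfies the same universal property, the canonical maps in both directions are mutually inverse. Functoriality in $M$ makes the isomorphism $\Sp_g$-equivariant, and since $M$ is concentrated in a single positive degree, it respects the grading as well.

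Granted this identification, writing $T(M) = \bigoplus_{n \geq 0} M^{\otimes n}$ and applying part (i) iteratively gives $h(M^{\otimes n}; t) = h(M;t)^n$, so
\[
h(\cU(\frakh); t) \;=\; h(T(M); t) \;=\; \sum_{n \geq 0} h(M;t)^n \;=\; \frac{1}{1 - h(M;t)}.
\]
The formal geometric series is well-defined because $h(M;t)$ has no constant term, as $M$ is concentrated in degree $2$.

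The only place where care is needed---and thus the ``main obstacle'' in an otherwise routine argument---is verifying that the identification $\cU(\bF(M)) \cong T(M)$ respects both the grading and the $\Sp_g$-action. Both follow from naturality of all the constructions in $M$. A direct PBW-based alternative, namely identifying $gr^\bullet \cU(\bF(M)) \cong \Sym^\bullet(\bF(M))$, would instead require a prior decomposition of $\bF(M)$ into graded pieces, which is exactly the computation this proposition is meant to bypass.
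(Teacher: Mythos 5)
Your proof is correct and follows essentially the same route as the paper: the paper likewise reduces part (ii) to the identification of $\cU(\frakh)$ with the tensor algebra $T(M)$ and sums the geometric series, simply asserting that identification where you spell out the universal-property argument and the $\Sp_g$-equivariance. Part (i) is identical.
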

\begin{remark}
With the above notation, one has $h(M;t)=t^2\cdot\chi_M$, where $\chi_M$ is the character of the $\Sp_g$-module $M$.
\end{remark}
\begin{proof}
The first part follows because characters multiply upon taking tensor products. For the second part, observe that $\cU(\frakh)$ is the free tensor algebra on $M$, therefore 
$$
h(\cU(\frakh);t) = 1 + h(M;t) + h(M;t)^2+\cdots
$$
This is exactly the statement in the second part.
\end{proof}

\begin{corollary}
In the notation of Section 2, The Euler-Poincar\'{e} series of $\cU(\frakr)$ is
\begin{equation}
h(\cU(\frakr)) = \frac{1}{1- h(\frakr/[\frakr,\frakr])}
\label{eqn:hU(r)=hr}
\end{equation}
\end{corollary}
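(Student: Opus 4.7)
The plan is to combine the two ingredients assembled in Section 2: the freeness result (Proposition \ref{prop:r_to_rrr}) and the generating-function identity (Proposition \ref{prop:basic_facts_EP}(ii)). First I would invoke Proposition \ref{prop:r_to_rrr} to choose a graded, $\Sp_g$-equivariant section $s \colon \frakr/[\frakr,\frakr] \to \frakr$ of the abelianization map, with the property that the image $M := s(\frakr/[\frakr,\frakr])$ freely generates $\frakr$ as a Lie algebra, so $\frakr \cong \bF(M)$ as graded $\Sp_g$-Lie algebras. Since $s$ respects both the grading and the $\Sp_g$-action, $M$ is isomorphic to $\frakr/[\frakr,\frakr]$ as a graded $\Sp_g$-module; in particular
$$
h(M;t) = h(\frakr/[\frakr,\frakr];t).
$$

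Next, I would apply Proposition \ref{prop:basic_facts_EP}(ii) to $\frakr = \bF(M)$ to conclude $h(\cU(\frakr);t) = (1 - h(M;t))^{-1}$, and substitute the identification above to obtain the claimed formula. The only slight subtlety is that Proposition \ref{prop:basic_facts_EP}(ii) is phrased for $M$ concentrated in a single homogeneous degree, whereas here $M$ may spread across several degrees. This is immediately remedied by rerunning the same proof: Poincar\'e--Birkhoff--Witt identifies $\cU(\bF(M))$ with the tensor algebra on $M$, so $h(\cU(\bF(M));t) = \sum_{n\geq 0} h(M;t)^n$, and since the standing convention of the paper places $M$ in homogeneous degrees $\geq 1$, this geometric series is a well-defined formal expression in $t$ with character coefficients. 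In our situation $M \cong \frakr/[\frakr,\frakr]$ in fact sits in degrees $\geq 2g$, the degree of $\conj\rho$.

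I do not anticipate any real obstacle; once the two propositions of Section 2 are in hand, the corollary is essentially a bookkeeping step, and the single point requiring mild care is the trivial generalization of Proposition \ref{prop:basic_facts_EP}(ii) to positively graded generating modules that are not concentrated in one degree.
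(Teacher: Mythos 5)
Your proof follows the paper's own argument: Proposition \ref{prop:r_to_rrr} presents $\frakr$ as a free Lie algebra on a graded $\Sp_g$-module isomorphic to $\frakr/[\frakr,\frakr]$, and then Proposition \ref{prop:basic_facts_EP}(ii) -- or rather the mild generalization you describe, with the generating module allowed to spread over several positive degrees -- gives the formula via the tensor-algebra geometric series. Your catch that Proposition \ref{prop:basic_facts_EP}(ii) is stated for a module concentrated in degree $2$ is fair; the paper itself already applies it loosely (to $V_{2g}$ in degree $1$ in Equation \ref{eqn:hUv=hv}), and the proof works verbatim for any generating module concentrated in degrees $\geq 1$. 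One small factual correction to your closing aside: $\conj\rho$, the image of $\rho=[a_1,b_1]\cdots[a_g,b_g]$ in $\bF(V_{2g})$, lies in homogeneous degree $2$, not $2g$. Each group commutator $[a_i,b_i]$ contributes in weight $2$, and the group product becomes a sum in the associated graded, so $\rho$ first becomes nontrivial in $G^{(2)}/G^{(3)}$; the paper uses this degree explicitly in Equation \ref{eqn:hrrr=hUh}. Thus $\frakr/[\frakr,\frakr]$ sits in degrees $\geq 2$, which is still enough for the formal power series to make sense, so the corollary is unaffected.
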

\begin{proof}
This follows from the second part of the previous proposition, combined with Proposition \ref{prop:r_to_rrr}.
\end{proof}

\begin{proposition}
With the notation from Section 2, the following formula holds
\begin{equation}
h(\cU(\frakg);t) = \frac{1}{1-t\cdot\chi_V + t^2}
\label{eqn:hUg}
\end{equation}
The character $\chi_V$ is that of the standard representation of $\Sp_g(\bZ)$ on $V$ - a $2g$-dimensional symplectic $\bZ$-module.
\end{proposition}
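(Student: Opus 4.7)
The plan is to extract $h(\cU(\frakg);t)$ from a single multiplicative identity: the formula of Proposition \ref{prop:UF=Ug_Ur}, which, after passing to Euler--Poincar\'e series, reads
$$
h\bigl(\cU(\bF(V_{2g}));t\bigr) \;=\; h\bigl(\cU(\frakg);t\bigr)\cdot h\bigl(\cU(\frakr);t\bigr).
$$
Two of these three series are computable from the preceding results, so the third (the desired one) follows by solving a linear equation.

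First I would compute the left-hand side. The module $V=V_{2g}$ sits in homogeneous degree $1$ of $\bF(V)$, so $h(V;t)=t\,\chi_V$. Applying Proposition \ref{prop:basic_facts_EP}(ii) (with the obvious modification for degree $1$ rather than degree $2$ generators, which changes nothing in the derivation) gives
$$
h\bigl(\cU(\bF(V));t\bigr) \;=\; \frac{1}{1-t\,\chi_V}.
$$
Next I would compute $h(\cU(\frakr);t)$. By Theorem \ref{thm:rrr_free_Ug}, $\frakr/[\frakr,\frakr]$ is a free $\cU(\frakg)$-module on $\conj{\rho}$. The element $\conj{\rho}$ is homogeneous of degree $2$ in $\bF(V)$ (it is a sum of Lie brackets of the degree-$1$ generators $a_i,b_i$), hence
$$
h\bigl(\frakr/[\frakr,\frakr];t\bigr)\;=\; t^2\,h\bigl(\cU(\frakg);t\bigr).
$$
Combining this with the corollary following Proposition \ref{prop:basic_facts_EP} (which uses Proposition \ref{prop:r_to_rrr} to write $\frakr$ as a free Lie algebra whose generating module has the same Euler--Poincar\'e series as $\frakr/[\frakr,\frakr]$) yields
$$
h\bigl(\cU(\frakr);t\bigr) \;=\; \frac{1}{1 - t^2\,h\bigl(\cU(\frakg);t\bigr)}.
$$

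Finally, substituting into the multiplicative identity and writing $X:=h(\cU(\frakg);t)$, I get
$$
\frac{1}{1-t\,\chi_V} \;=\; \frac{X}{1 - t^2 X},
$$
which rearranges to $X\bigl(1 - t\,\chi_V + t^2\bigr)=1$, i.e.\ the claimed formula \eqref{eqn:hUg}. The only point requiring any care is the identification of the internal degree of $\conj{\rho}$ as $2$ and the resulting $t^2$ shift in the character of $\frakr/[\frakr,\frakr]$ as a $\cU(\frakg)$-module; once that is pinned down, the rest is bookkeeping.
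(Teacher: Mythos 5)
Your proof is correct and is essentially identical to the paper's own argument: solve the multiplicative identity from Proposition \ref{prop:UF=Ug_Ur} for $h(\cU(\frakg);t)$, computing $h(\cU(\bF(V_{2g})))$ directly and $h(\cU(\frakr))$ via Theorem \ref{thm:rrr_free_Ug} and Proposition \ref{prop:r_to_rrr}. You even correctly flag the needed (trivial) adjustment of Proposition \ref{prop:basic_facts_EP}(ii) from degree-$2$ to degree-$1$ generators, which the paper applies implicitly.
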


\begin{proof}
Proposition \ref{prop:UF=Ug_Ur} and the first part of Proposition \ref{prop:basic_facts_EP} imply the identity
\begin{equation}
h(\cU(\bF(V_{2g}))) = h(\cU(\frakr))\cdot h(\cU(\frakg))
\end{equation}

The second part of Proposition \ref{prop:basic_facts_EP} implies that
\begin{equation}
h(\cU(\bF(V_{2g}))) = \frac{1}{1-t\cdot\chi_V}
\label{eqn:hUv=hv}
\end{equation}

Because $\frakr/[\frakr,\frakr]$ is a free $\cU(\frakg)$-module (Theorem \ref{thm:rrr_free_Ug}), with generator in homogeneous degree two, we have
\begin{equation}
\label{eqn:hrrr=hUh}
h(\frakr/[\frakr,\frakr];t) = t^2 h(\cU(\frakg))
\end{equation}
Combining equations \ref{eqn:hU(r)=hr}, \ref{eqn:hUv=hv} and \ref{eqn:hrrr=hUh}, we find the identity
$$
\frac{1}{1-t\cdot \chi_V} =  \frac{h(\cU(\frakg))}{1-t^2h(\cU(\frakg))}
$$
Rearranging, we find
$$h(\cU(\frakg);t) = \frac{1}{1-t\cdot\chi_V + t^2}$$
This is exactly the desired expression.
\end{proof}

\section{Computing the characters of the representation}

\paragraph{Outline.} Equation \ref{eqn:hUg} gives an expression for the character of the universal enveloping algebra. This section contains the calculation of the character for the Lie algebra $\frakg$ itself. The approach requires determining the character of the representation $\frakg_i$ from knowing the character of $\Sym^{\bullet}\frakg_i$. This, in turn, is based on the M\"{o}bius inversion formula.

\paragraph{The First reduction.} 

At the level of Euler-Poincar\'{e} series, $\cU(\frakg)$ and $gr^\bullet\cU(\frakg)$ coincide because the filtration is compatible with the $\Sp_g$-module structure. Using the Poincar\'{e}-Birkhoff-Witt theorem, we find
$$
gr^\bullet \cU(\frakg) = \Sym^\bullet(\frakg) = \bigotimes_{i\geq 1} \Sym^\bullet(\frakg_i)
$$
The second equality above follows from formula \ref{eqn:sym_A+B}. Reading this equality at the level of Euler-Poincar\'{e} series, combined with equation \ref{eqn:hUg} and part (i) of Proposition \ref{prop:basic_facts_EP}, we find
$$
\frac{1}{1-t\cdot \chi_V + t^2} = \prod_{i\geq 1} h(\Sym^\bullet(\frakg_i);t)
$$
Note that $\frakg_i$ sits in homogeneous degree $i$, so the above product makes sense as a formal power series in $t$, and moreover both sides are of the form $$1+(\ldots)t+(\ldots)t^2+\cdots$$

This means we can take formally a $\log$ of both power series to find
\begin{equation}
\label{eqn:log_hUg}
-\log (1-t\cdot \chi_V + t^2) = \sum_{i\geq 1} \log h(\Sym^\bullet(\frakg_i);t)
\end{equation}

The following result relates the character of a module and its symmetric powers.

\begin{proposition}
Let $\GL_k(\bZ)$ act naturally on the free $\bZ$-module $\bZ^k=:M$. Assign homogeneous degree $i$ to the module $M$ and form the graded module $\Sym^\bullet (M)$. Then the following equality of formal power series in $t$ holds:
$$
\log h(\Sym^\bullet(M);t) = \sum_{d\geq 1} \frac{\chi^{(d)}}{d}\cdot t^{i\cdot d}
$$
The function $\chi^{(d)}:\GL_k(\bZ)\to \bZ$ is defined by $\chi^{(d)}(A)=\chi(A^d)$.
\end{proposition}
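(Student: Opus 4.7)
My plan is to use the standard eigenvalue argument. Both sides of the claimed identity are, coefficient-by-coefficient in $t$, characters of finite-dimensional polynomial representations of $\GL_k$, hence polynomial class functions on the complex algebraic group $\GL_k$. I would therefore extend scalars and verify the identity on the Zariski-dense set of diagonalizable complex matrices.

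Fixing such an $A$ with eigenvalues $\lambda_1,\ldots,\lambda_k$, I would use the weight-space decomposition of $\Sym^n(M)$ under $A$ to identify its trace with the complete homogeneous symmetric polynomial $h_n(\lambda_1,\ldots,\lambda_k)$. Since $M$ sits in homogeneous degree $i$, elements of $\Sym^n(M)$ lie in degree $ni$, so
\[
h(\Sym^\bullet(M); t)(A) = \sum_{n\geq 0} h_n(\lambda_1,\ldots,\lambda_k)\, t^{in} = \prod_{j=1}^k \frac{1}{1-\lambda_j t^i},
\]
the second equality being the classical generating function identity for the $h_n$.

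Next I would take the formal logarithm of both sides, producing
\[
\log h(\Sym^\bullet(M); t)(A) = -\sum_{j=1}^k \log(1-\lambda_j t^i) = \sum_{d \geq 1} \frac{t^{id}}{d}\sum_{j=1}^k \lambda_j^d,
\]
and recognize the inner power sum as $\chi(A^d) = \chi^{(d)}(A)$. This gives the desired formula on the diagonalizable locus, and the coefficient-wise polynomial identity in $t$ then extends to all of $\GL_k$ by density, restricting in particular to $\GL_k(\bZ)$.

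The argument is essentially bookkeeping around standard symmetric function identities, and I do not anticipate any serious obstacle. The only mild subtlety will be the transition between a pointwise identity on diagonalizable matrices and the corresponding identity of characters, but this is immediate: each coefficient of $t^n$ on either side is a polynomial in the matrix entries, so the identity propagates from a Zariski-dense set to the whole group.
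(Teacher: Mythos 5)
Your proof is correct and follows essentially the same route as the paper's: reduce to diagonalizable complex matrices by density and polynomiality of the coefficients, compute $h(\Sym^\bullet(M);t)(A)$ as the product $\prod_j (1-\lambda_j t^i)^{-1}$, take logs, and identify the power sums with $\chi(A^d)$. The only cosmetic difference is that you name the complete homogeneous symmetric polynomials explicitly, which the paper leaves implicit.
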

\begin{proof}
The statement of the proposition is that of an equality between coefficients of two formal power series. These coefficients are polynomials with rational coefficients in the entries of matrices in $\GL_k(\bZ)$. To check such an equality, it suffices to check it for the group $\GL_k(\bC)$, and moreover the dependence in $t$ can be viewed as an analytic function in a small disk around the origin. To check this last identity, it suffices to check an equality between two analytic function on $\bD\times GL_k(\bC)$ (where $\bD$ is a small disk). It now suffices to check this for diagonalizable matrices in $\GL_k(\bC)$, since this set is dense.

Take one such diagonalizable $A$ and suppose its eigenvalues on $M\otimes_\bZ \bC$ are $\lambda_1,\ldots,\lambda_k$. Then the sum of the eigenvalues on $\Sym^\bullet(M)$ are, after arranging them by homogeneity using the variable $t$, given by
$$
\prod_{j=1}^k \frac{1}{1-t^i\lambda_j}
$$
Taking logs of the above, we see that the function $A\mapsto \log h(\Sym^\bullet(M);t)(A)$ is given by
$$
A\mapsto \sum_{n\geq 1} \frac{\lambda_1^d+\ldots+\lambda_k^d}{d} \cdot t^{i\cdot d}
$$ 
On the right-hand side, the operator $A$ is first raised to the $d$\textsuperscript{th} power and then trace is taken. So we find what we wanted:
$$
\log h(\Sym^\bullet(M);t) = \sum_{d\geq 1} \frac{\chi^{(d)}}{d}\cdot t^{i\cdot d}
$$
\end{proof}
Using the previous proposition, we can rewrite right-hand side of Equation \ref{eqn:log_hUg} as follows
\begin{align}
\sum_{i\geq 1} \log h(\Sym^\bullet(\frakg_i);t) &=
\sum_{i\geq 1}\sum_{d\geq 1} \frac{\chi_i^{(d)}}{d}\cdot t^{i\cdot d}\notag \\
&= \sum_{N\geq 1} t^N \sum_{d\cdot i=N} \frac{\chi_i^{(d)}}{d}
\label{eqn:RHS}
\end{align}
In this formula $\chi_i$ denotes the character of the $i$\textsuperscript{th} graded piece $\frakg_i$.

\paragraph{The Second Reduction.}

There are two relatively convenient ways to express the left hand side of Equation \ref{eqn:log_hUg} and we do not want to pick out one yet. Let us therefore introduce the notation
\begin{equation}
\label{eqn:def_A_N}
-\log (1-t\chi_V + t^2) = \sum_{N\geq 1} A_N t^N
\end{equation}
With this notation, the next proposition gives an expression for the character~$\chi_N$.

\begin{proposition}
\label{prop:chi_nonexplicit}
For the $N$\textsuperscript{th} graded piece $\frakg_N$, the formula for its character is
$$
N\cdot \chi_N = \sum_{d|N} \mu\left(\frac{N}{d}\right) \cdot d \cdot A_d^{(\frac Nd)}
$$
Here, $\mu$ is the M\"{o}bius function and $A_d^{(\frac Nd)}$ represents the function $A_d$, precomposed with the operation of taking the $\frac Nd$ power of a matrix.
\end{proposition}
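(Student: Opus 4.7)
My plan is to compare coefficients of $t^N$ on both sides of Equation \ref{eqn:log_hUg}, using the expression derived in Equation \ref{eqn:RHS}. This comparison yields directly the base identity
$$A_N = \sum_{d i = N} \frac{\chi_i^{(d)}}{d},$$
an equality of class functions on $\Sp_g(\bZ)$ valid for every $N \geq 1$.

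From this identity, the goal is to solve for $\chi_N$ by M\"obius inversion. The essential compatibility is that the operator $\psi \mapsto \psi^{(k)}$, defined by $\psi^{(k)}(M) = \psi(M^k)$, is additive in $\psi$ and satisfies $(\psi^{(a)})^{(b)} = \psi^{(ab)}$. Applying this with $k = N/d$ to the base identity at level $d$ gives
$$A_d^{(N/d)} = \sum_{e i = d} \frac{\chi_i^{(eN/d)}}{e}.$$

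The next step is to substitute this into the claimed sum $\sum_{d \mid N} \mu(N/d)\, d\, A_d^{(N/d)}$, interchange the order of summation, and regroup by the exponent $j = eN/d$, which ranges over divisors of $N$. After the change of variable $m = N/d$, the inner sum collapses to $\sum_{m \mid j} \mu(m)$ (using that $j \mid N$, so divisibility of $m$ by $N$ is automatic once $m \mid j$), which by the classical M\"obius identity equals $1$ when $j=1$ and vanishes otherwise. Only the term with $j = 1$, forcing $i = N$, survives, and it contributes exactly $N \chi_N$, matching the proposition.

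There is no substantive obstacle: the argument is bookkeeping combined with the identity $\sum_{d \mid n} \mu(d) = [n=1]$. The only point requiring care is tracking the divisibility conditions through the substitutions $m = N/d$ and $j = eN/d$, and confirming that $(\cdot)^{(k)}$ distributes over finite sums, which is immediate since it is precomposition with a group endomorphism of $\Sp_g(\bZ)$.
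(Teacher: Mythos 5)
Your proposal is correct and is essentially the paper's own argument, merely with different variable names. Both proofs start from the coefficient identity $A_N=\sum_{di=N}\chi_i^{(d)}/d$, precompose the identity at level $d$ with the $(N/d)$-th power map, substitute into $\sum_{d\mid N}\mu(N/d)\,d\,A_d^{(N/d)}$, reindex so that the power exponent depends only on the index of $\chi$ (you use $j=eN/d=N/i$; the paper uses $x=n/d$ and notes the exponent is $N/x$), and then collapse the resulting inner sum by $\sum_{m\mid j}\mu(m)=[j=1]$.
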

\begin{proof}
The expression for the right-hand side of Equation \ref{eqn:log_hUg} provided by Equation \ref{eqn:RHS}, combined with the definition of $A_n$ yields
$$
A_n = \sum_{d|n} \frac{1}{d} \cdot \chi_{\frac nd}^{(d)}
$$
Multiplying by $n$, this can be rewritten as
\begin{equation}
\label{eqn:Dir_convolution}
n\cdot A_n = \sum_{d|n} \frac nd \cdot \chi_{\frac nd}^{(d)}
\end{equation}
To express a given character $\chi_N$ using the above relation, take in the above some $n|N$ and precompose the above identity with taking $\frac Nn$ powers and multiply by $\mu(\frac Nn)$ to find
$$
n \cdot A_n^{(\frac Nn)}\cdot \mu\left(\frac Nn\right) = \mu\left(\frac Nn\right)\cdot \sum_{d|n} \frac nd \cdot \chi_{\frac nd}^{(d\cdot n)}
$$
Keep $N$ fixed and sum the above equation over all $n|N$ to find
\begin{align*}
\sum_{n|N} n A_n^{(\frac Nn)}\mu\left(\frac Nn\right) &=
 \sum_{n|N}\mu\left(\frac Nn\right)\sum_{d|n}\frac nd\cdot \chi_{\frac nd}^{(d\cdot \frac Nn)}=\\
&= \sum_{x|N}\,\, \sum_{d|\frac Nx} \mu\left(\frac N{xd}\right) x \chi_x^{(\frac Nx)}=\\
&= \mu(1)N \chi_N^{(1)} = N\chi_N
\end{align*}
In passing to the second line above, we changed summation to the variable $x:=\frac nd$. In passing to the last line, we used the standard fact about the M\"{o}bius function that $\sum_{d|B}\mu(d)=0$, unless $B=1$. This finishes the proof of the proposition.
\end{proof}
\begin{remark} Recall that for two sequences $A_n, B_n$, their Dirichlet convolution $C_n = (A*B)_n$ is defined by
$$C_n:=\sum_{d|n}A_d\cdot B_{\frac nd}$$
With this in mind, the computation above can be interpreted as follows. Equation \ref{eqn:Dir_convolution} reads expresses $nA_n$ as a Dirichlet product
$$nA_n = n\chi_n * [\bullet]^{(n)}$$
where $[\bullet]^{(n)}$ is the operator taking a matrix to its $n$\textsuperscript{th} power. Since we are interested in $n\chi_n$ we need the Dirichlet inverse of $[\bullet]^{{(n)}}$ and this is given by  $\mu(n)[\bullet]^{(n)}$.
\end{remark}

\paragraph{The formulas.} The proposition below provides explicit formulas for both $A_N$ and $\chi_N$.
\begin{proposition}
\label{prop:formulas}
Two (equivalent) expressions for $A_N$ are
\begin{equation}
\label{eqn:A_N_explicit}
A_N = \sum_{k \geq 0} \chi_V^{N-2k}\frac{1}{N-k}\dbinom{N-k}{k}(-1)^k
\end{equation}
\begin{equation}
\label{eqn:A_N_two_terms}
A_N = \frac 1N \left[ \left(\frac{\chi_V + \sqrt{\chi_V^2-4}}{2}\right)^N  + \left(\frac{\chi_V - \sqrt{\chi_V^2-4}}{2}\right)^N\right]
\end{equation}
Combined with the formula given in Proposition \ref{prop:chi_nonexplicit}, we get two (equivalent) formulas for the characters
\begin{equation}
\chi_N = \frac 1N \sum_{d|N}\mu\left(\frac Nd\right) d 
\sum_{k \geq 0} \chi_V^{d-2k,(\frac Nd)}\frac{1}{d-k}\dbinom{d-k}{k}(-1)^k
\end{equation}
\begin{equation}
\chi_N = \frac 1N \sum_{d|N}\mu\left(\frac Nd\right) 
\left[ \left(\frac{\chi_V^{(\frac Nd)} + \sqrt{\chi_V^{2,(\frac Nd)}-4}}{2}\right)^d  + \left(\frac{\chi_V^{(\frac Nd)} - \sqrt{\chi_V^{2,(\frac Nd)}-4}}{2}\right)^d\right]
\end{equation}
In the last two equations, for a character $\chi$, the expression $\chi^{x,(y)}$ is defined by $\chi^{x,(y)}(M)=\chi(M^y)^x$.
\end{proposition}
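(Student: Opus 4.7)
The plan is to treat $\chi_V$ formally as a variable and factor the quadratic $1 - t\chi_V + t^2$ over an extension. Write
\[
1 - t\chi_V + t^2 = (1 - t\alpha)(1 - t\beta), \qquad \alpha + \beta = \chi_V, \quad \alpha\beta = 1,
\]
so that $\alpha, \beta = \bigl(\chi_V \pm \sqrt{\chi_V^2 - 4}\bigr)/2$. These expressions live in a formal extension, but the combinations I will need (namely $\alpha^N + \beta^N$) are polynomials in $\chi_V$ with integer coefficients, hence define actual class functions on $\Sp_g$.

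Taking $-\log$ of both factors and using $-\log(1-x) = \sum_{N\geq 1} x^N/N$ gives
\[
-\log(1 - t\chi_V + t^2) = \sum_{N\geq 1} \frac{\alpha^N + \beta^N}{N}\, t^N,
\]
so by the definition \eqref{eqn:def_A_N} of $A_N$ we immediately read off
\[
A_N = \frac{1}{N}\bigl(\alpha^N + \beta^N\bigr),
\]
which is exactly \eqref{eqn:A_N_two_terms}.

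For \eqref{eqn:A_N_explicit}, the task is to express $\alpha^N + \beta^N$ explicitly as a polynomial in $\chi_V = \alpha + \beta$ using $\alpha\beta = 1$. This is the classical Waring / Lucas formula
\[
\alpha^N + \beta^N = \sum_{k\geq 0} (-1)^k \frac{N}{N-k}\binom{N-k}{k}(\alpha\beta)^k (\alpha+\beta)^{N-2k},
\]
which I would prove by induction on $N$ from the two-term recurrence $L_N = \chi_V L_{N-1} - L_{N-2}$ (equivalently, by recognizing the $L_N$ as rescaled Chebyshev polynomials of the first kind evaluated at $\chi_V/2$). Setting $\alpha\beta = 1$ and dividing by $N$ yields \eqref{eqn:A_N_explicit}.

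The two formulas for $\chi_N$ are then obtained by substituting the two expressions for $A_N$ into Proposition \ref{prop:chi_nonexplicit}, taking care to apply the power operation $(\cdot)^{(N/d)}$ correctly: since $A_d$ is a polynomial expression in $\chi_V$, the class function $A_d^{(N/d)}$ is obtained simply by replacing every occurrence of $\chi_V$ by $\chi_V^{(N/d)}$, with the outer exponent $x$ in the notation $\chi_V^{x,(N/d)}(M) = \chi_V(M^{N/d})^x$ tracking the power of $\chi_V$ appearing. The only thing to be slightly careful about is the square-root formula: the expression $\sqrt{\chi_V^{2,(N/d)} - 4}$ only makes literal sense inside the symmetric combination $\alpha^d + \beta^d$, which by the Waring identity is a genuine polynomial in $\chi_V^{(N/d)}$, so the substitution is well-defined. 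I expect no serious obstacle; the main step is the combinatorial identity for $\alpha^N + \beta^N$, which is standard.
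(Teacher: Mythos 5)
Your proof is correct. The derivation of \eqref{eqn:A_N_two_terms} is identical to the paper's: factor $1 - t\chi_V + t^2 = (1-t\alpha)(1-t\beta)$ with $\alpha+\beta=\chi_V$, $\alpha\beta=1$, and expand $-\log$ term by term. For \eqref{eqn:A_N_explicit} you diverge: you derive it as a corollary of \eqref{eqn:A_N_two_terms} by invoking the Waring--Lucas identity
\[
\alpha^N + \beta^N = \sum_{k\geq 0}(-1)^k\,\frac{N}{N-k}\binom{N-k}{k}(\alpha\beta)^k(\alpha+\beta)^{N-2k},
\]
which you would establish separately by induction on the recurrence $L_N = \chi_V L_{N-1} - L_{N-2}$. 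The paper instead obtains \eqref{eqn:A_N_explicit} directly and independently from the definition of $A_N$, by the trick of regrouping $1-t\chi_V+t^2 = 1 - t(\chi_V - t)$, expanding $-\log\bigl(1-t(\chi_V-t)\bigr) = \sum_n \frac{1}{n}t^n(\chi_V-t)^n$, and then applying the binomial theorem and collecting the coefficient of $t^N$ with $n+k=N$. The paper's route is shorter and avoids the auxiliary combinatorial identity; your route makes the equivalence of the two formulas for $A_N$ more explicit but costs you a separate induction. Both are sound, and your handling of the substitution $\chi_V \mapsto \chi_V^{(N/d)}$ in the final two formulas (including the remark that the square-root expression only has meaning inside the symmetric polynomial combination) matches what the paper does implicitly when it says the last two formulas "follow directly."
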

\begin{proof}
Only formulas \ref{eqn:A_N_explicit} and \ref{eqn:A_N_two_terms} require proof, since they imply the other two directly. Recall the definition of $A_N$ given by Equation \ref{eqn:def_A_N}
$$
\sum_{N\geq 1} A_N t^N = -\log (1-t\chi_V + t^2)
$$
The first formula for $A_N$ follows from the following rewriting
$$
-\log (1-t\chi_V + t^2) = -\log \left(1-t(\chi_V -t)\right) = \sum_{N\geq 1} \frac 1N t^N\left(\chi_V - t\right)^N
$$
Expanding each individual term using the binomial formula, we arrive at the formula given by equation \ref{eqn:A_N_explicit}.

For the second formula, we can factor the quadratic $1-t\chi_V + t^2$ to find
$$
-\log(1-t\chi_V + t^2) = -\log\left( \left(1-t\cdot\frac{\chi_V-\sqrt{\chi_V^2-4}}{2}\right)
\cdot \left(1-t\cdot \frac{\chi_V+\sqrt{\chi_V^2-4}}{2}\right) \right)
$$
So we have
\begin{align*}
\sum_{N\geq 1} A_N t^N &= -\log \left(1-t\cdot \frac{\chi_V-\sqrt{\chi_V^2-4}}{2}\right) - \log \left(1-t\cdot\frac{\chi_V+\sqrt{\chi_V^2-4}}{2}\right) =\\
&= \sum_{N\geq 1} \frac {t^N}N \left[ \left(\frac{\chi_V + \sqrt{\chi_V^2-4}}{2}\right)^N  + \left(\frac{\chi_V - \sqrt{\chi_V^2-4}}{2}\right)^N\right]
\end{align*}
This is exactly the second formula for $A_N$.
\end{proof}

\bibliography{bibliography_Lie(pi_1)}
\bibliographystyle{alpha}
\end{document}